\let\th@plain\relax
\pgfplotsset{compat=newest}
\DeclareFontFamily{U}{MnSymbolA}{}
\DeclareFontShape{U}{MnSymbolA}{m}{n}{
    <-6> MnSymbolA5
    <6-7> MnSymbolA6
    <7-8> MnSymbolA7
    <8-9> MnSymbolA8
    <9-10> MnSymbolA9
    <10-12> MnSymbolA10
    <12-> MnSymbolA12
}{}
\DeclareFontShape{U}{MnSymbolA}{b}{n}{
    <-6> MnSymbolA-Bold5
    <6-7> MnSymbolA-Bold6
    <7-8> MnSymbolA-Bold7
    <8-9> MnSymbolA-Bold8
    <9-10> MnSymbolA-Bold9
    <10-12> MnSymbolA-Bold10
    <12-> MnSymbolA-Bold12
}{}
\DeclareSymbolFont{MnSyA}{U}{MnSymbolA}{m}{n}
\DeclareMathSymbol{\lcirclearrowright}{\mathrel}{MnSyA}{252}
\DeclareMathSymbol{\lcirclearrowdown}{\mathrel}{MnSyA}{255}
\DeclareMathSymbol{\rcirclearrowleft}{\mathrel}{MnSyA}{250}
\DeclareMathSymbol{\rcirclearrowdown}{\mathrel}{MnSyA}{251}
\DeclareFontFamily{U}{MnSymbolC}{}
\DeclareSymbolFont{MnSyC}{U}{MnSymbolC}{m}{n}
\DeclareFontShape{U}{MnSymbolC}{m}{n}{
    <-6>  MnSymbolC5
    <6-7>  MnSymbolC6
    <7-8>  MnSymbolC7
    <8-9>  MnSymbolC8
    <9-10> MnSymbolC9
    <10-12> MnSymbolC10
    <12->   MnSymbolC12%
}{}
\DeclareMathSymbol{\powerset}{\mathord}{MnSyC}{180}
\DeclareMathAlphabet{\mathpzc}{OT1}{pzc}{m}{it}
\def\boolwahr{true}
\def\boolfalsch{false}
\def\boolleer{}
\let\boolinappendix\boolfalsch
\let\boolinmdframed\boolfalsch
\let\eqtagset\boolfalsch
\let\eqtaglabel\boolleer
\let\eqtagsymb\boolleer
\newlength\rtab
\newlength\gesamtlinkerRand
\newlength\gesamtrechterRand
\newlength\ownspaceabovethm
\newlength\ownspacebelowthm
\def\secnumberingpt{.}
\def\secnumberingseppt{.}
\def\subsecnumberingseppt{}
\def\thmnumberingpt{.}
\def\thmnumberingseppt{}
\def\thmForceSepPt{.}
\definecolor{leer}{gray}{1}
\definecolor{boxgrau}{gray}{0.85}
\definecolor{dunkelgrau}{gray}{0.5}
\definecolor{maroon}{rgb}{0.6901961,0.1882353,0.3764706}
\definecolor{dunkelgruen}{rgb}{0.015625,0.363281,0.109375}
\definecolor{dunkelrot}{rgb}{0.5450980392,0,0}
\definecolor{dunkelblau}{rgb}{0,0,0.5450980392}
\definecolor{blau}{rgb}{0,0,1}
\definecolor{newresult}{rgb}{0.6,0.6,0.6}
\definecolor{improvedresult}{rgb}{0.9,0.9,0.9}
\definecolor{hervorheben}{rgb}{0,0.9,0.7}
\definecolor{starkesblau}{rgb}{0.1019607843,0.3176470588,0.8156862745}
\definecolor{achtung}{rgb}{1,0.5,0.5}
\definecolor{frage}{rgb}{0.5,1,0.5}
\definecolor{schreibweise}{rgb}{0,0.7,0.9}
\definecolor{axiom}{rgb}{0,0.3,0.3}
\def\let@name#1#2{
    \expandafter\let\csname #1\expandafter\endcsname\csname #2\endcsname\relax
}
\DeclareRobustCommand\crfamily{\fontfamily{ccr}\selectfont}
\DeclareTextFontCommand{\textcr}{\crfamily}
\def\ifthenelseleer#1#2#3{\ifthenelse{\equal{#1}{}}{#2}{#1#3}}
\def\bedingtesspaceexpand#1#2#3{\ifthenelseleer{\csname #1\endcsname}{#3}{#2#3}}
\def\nvraum{\@ifnextchar\bgroup{\nvraum@c}{\nvraum@bes}}
    \def\nvraum@c#1{\vspace*{-#1\baselineskip}}
    \def\nvraum@bes{\vspace*{-\baselineskip}}
\def\erlaubeplatz{\relax\ifmmode\else\@\xspace\fi}
\def\entferneplatz{\relax\ifmmode\else\expandafter\@gobble\fi}
\def\send@toaux#1{\@bsphack\protected@write\@auxout{}{\string#1}\@esphack}
\def\rlabel#1[#2]#3#4#5{#5\rlabel@aux{#1}[#2]{#3}{#4}{#5}}
    \def\rlabel@aux#1[#2]#3#4#5{%
        \send@toaux{\newlabel{#1}{{\@currentlabel}{\thepage}{{\unexpanded{#5}}}{#2.\csname the#2\endcsname}{}}}\relax%
    }
\def\tag@rawscheme#1#2[#3]#4#5{\@ifnextchar[{\tag@rawscheme@{#1}{#2}[#3]{#4}{#5}}{\tag@rawscheme@{#1}{#2}[#3]{#4}{#5}[*]}}
    \def\tag@rawscheme@#1#2[#3]#4#5[#6]{\@ifnextchar\bgroup{\tag@rawscheme@@{#1}{#2}[#3]{#4}{#5}[#6]}{\tag@rawscheme@@{#1}{#2}[#3]{#4}{#5}[#6]{}}}
    \def\tag@rawscheme@@#1#2[#3]#4#5[#6]#7{%
        \ifthenelse{\equal{#6}{*}}{%
            \ifthenelse{\equal{#7}{\boolleer}}{\refstepcounter{#3}#4\csname the#3\endcsname#5}{#4#7#5}%
        }{%
            \refstepcounter{#3}#4%
            \ifthenelse{\equal{#7}{\boolleer}}{\rlabel{#6}[#3]{#1}{#2}{\csname the#3\endcsname}}{\rlabel{#6}[#3]{#1}{#2}{#7}}%
            #5%
        }%
    }
\def\tag@scheme#1#2[#3]{\tag@rawscheme{#1}{#2}[#3]{\upshape(}{\upshape)}}
\def\eqtag@post#1{\makebox[0pt][r]{#1}}
\def\eqtag@pre{\tag@scheme{Eq}{Equation}[Xe]}
\def\eqtag{\@ifnextchar[{\eqtag@}{\eqtag@[*]}}
    \def\eqtag@[#1]{\@ifnextchar\bgroup{\eqtag@@[#1]}{\eqtag@@[#1]{}}}
    \def\eqtag@@[#1]#2{\eqtag@post{\eqtag@pre[#1]{#2}}}
\def\eqcref#1{\text{(\ref{#1})}}
\def\punktcref#1{\eqcref{it:#1:\beweislabel}}
\def\opfromto[#1]_#2^#3{\underset{#2}{\overset{#3}{#1}}}
\def\mathclap#1{#1}
\def\oberunterset#1{\@ifnextchar^{\oberunterset@oben{#1}}{\oberunterset@unten{#1}}}
    \def\oberunterset@oben#1^#2_#3{\underset{\mathclap{#3}}{\overset{\mathclap{#2}}{#1}}}
    \def\oberunterset@unten#1_#2^#3{\underset{\mathclap{#2}}{\overset{\mathclap{#3}}{#1}}}
    \def\breitunderbrace#1_#2{\underbrace{#1}_{\mathclap{#2}}}
    \def\breitoverbrace#1^#2{\overbrace{#1}^{\mathclap{#2}}}
    \def\breitunderbracket#1_#2{\underbracket{#1}_{\mathclap{#2}}}
    \def\breitoverbracket#1^#2{\overbracket{#1}^{\mathclap{#2}}}
\def\generatenestedsecnumbering#1#2#3{%
    \expandafter\gdef\csname thelong#3\endcsname{%
        \expandafter\csname the#2\endcsname%
        \secnumberingpt%
        \expandafter\csname #1\endcsname{#3}%
    }%
    \expandafter\gdef\csname theshort#3\endcsname{%
        \expandafter\csname #1\endcsname{#3}%
    }%
}
\def\generatenestedthmnumbering#1#2#3{%
    \expandafter\gdef\csname the#3\endcsname{%
        \expandafter\csname the#2\endcsname%
        \thmnumberingpt%
        \expandafter\csname #1\endcsname{#3}%
    }%
    \expandafter\gdef\csname theshort#3\endcsname{%
        \expandafter\csname #1\endcsname{#3}%
    }%
}
\def\+#1{\addtocounter{#1}{1}}
\def\setcounternach#1#2{\setcounter{#1}{#2}\addtocounter{#1}{-1}}
\def\forcepunkt#1{#1\IfEndWith{#1}{.}{}{.}}
\def\lateinabkuerzung#1#2{%
    \expandafter\gdef\csname #1\endcsname{\emph{#2}\@ifnextchar.{\entferneplatz}{\erlaubeplatz}}
}
\def\deutscheabkuerzung#1#2{%
    \expandafter\gdef\csname #1\endcsname{{#2}\@ifnextchar.{\entferneplatz}{\erlaubeplatz}}
}
\def\matrix#1{\left(\begin{array}{#1}}
    \def\endmatrix{\end{array}\right)}
\def\smatrix{\left(\begin{smallmatrix}}
    \def\endsmatrix{\end{smallmatrix}\right)}
\def\multiargrekursiverbefehl#1#2#3#4#5#6#7#8{%
    \expandafter\gdef\csname#1\endcsname #2##1#4{\csname #1@anfang\endcsname##1#3\egroup}
    \expandafter\def\csname #1@anfang\endcsname##1#3{#5##1\@ifnextchar\egroup{\csname #1@ende\endcsname}{#7\csname #1@mitte\endcsname}}
    \expandafter\def\csname #1@mitte\endcsname##1#3{#6##1\@ifnextchar\egroup{\csname #1@ende\endcsname}{#7\csname #1@mitte\endcsname}}
    \expandafter\def\csname #1@ende\endcsname##1{#8}
}
\def\faelle[#1]#2{\left\{\begin{array}[#1]{#2}}
    \def\endfaelle{\end{array}\right.}
\def\BeweisRichtung[#1]{\@ifnextchar\bgroup{\@BeweisRichtung@c[#1]}{\@BeweisRichtung@bes[#1]}}
    \def\@BeweisRichtung@bes[#1]{{\bfseries(#1).~}}
    \def\@BeweisRichtung@c[#1]#2#3{{\bfseries(#2#1#3).~}}
\def\erzeugeBeweisRichtungBefehle#1#2{
    \expandafter\gdef\csname #1text\endcsname##1##2{\BeweisRichtung[#2]{##1}{##2}}
    \expandafter\gdef\csname #1\endcsname{%
        \@ifnextchar\bgroup{\csname #1@\endcsname}{\csname #1text\endcsname{}{}}%
    }
    \expandafter\gdef\csname #1@\endcsname##1##2{%
        \csname #1text\endcsname{\punktcref{##1}}{\punktcref{##2}}%
    }
}
\def\cal#1{\mathcal{#1}}
\def\mathfrak#1{\mbox{\usefont{U}{euf}{m}{n}#1}}
\def\rectangleblack{\text{\RectangleBold}}
\def\squareblack{\blacksquare}
\def\crefname@full#1#2#3#4#5{%
    \crefname{#1}{#2}{#3}
    \Crefname{#1}{#4}{#5}
}
\def\crefname@fullmod#1#2#3#4#5{%
    \crefname@full{#1}{#2}{#3}{#4}{#5}
    \crefname@full{#1@basic}{#2}{#3}{#4}{#5}
    \crefname@full{#1@withName}{#2}{#3}{#4}{#5}
}
    \def\qedEIGEN#1{\@ifnextchar[{\qedEIGEN@c{#1}}{\qedEIGEN@bes{#1}}}
    \def\qedEIGEN@bes#1{%
        \parfillskip=0pt
        \widowpenalty=10000
        \displaywidowpenalty=10000
        \finalhyphendemerits=0
        \leavevmode
        \unskip
        \nobreak
        \hfil
        \penalty50
        \hskip.2em
        \null
        \hfill
        #1
        \par%
    }
    \def\qedEIGEN@c#1[#2]{%
        \parfillskip=0pt
        \widowpenalty=10000
        \displaywidowpenalty=10000
        \finalhyphendemerits=0
        \leavevmode
        \unskip
        \nobreak
        \hfil
        \penalty50
        \hskip.2em
        \null
        \hfill
        {#1~{\small\bfseries\upshape (#2)}}%
        \par%
    }
    \def\qedVARIANT#1#2{
        \expandafter\def\csname ennde#1Sign\endcsname{#2}
        \expandafter\def\csname ennde#1\endcsname{\@ifnextchar[{\qedEIGEN@c{#2}}{\qedEIGEN@bes{#2}}} 
    }
    \def\ra@pretheoremwork{
        \setlength{\theorempreskipamount}{\ownspaceabovethm}
    }
    \def\rathmtransfer#1#2{
        \expandafter\def\csname #2\endcsname{\csname #1\endcsname}
        \expandafter\def\csname end#2\endcsname{\csname end#1\endcsname}
    }
    \def\ranewthm#1#2#3[#4]{
        \theoremstyle{\current@theoremstyle}
        \theoremseparator{\current@theoremseparator}
        \theoremprework{\ra@pretheoremwork}
        \@ifundefined{#1@basic}{\newtheorem{#1@basic}[#4]{#2}}{\renewtheorem{#1@basic}[#4]{#2}}
        \theoremstyle{\current@theoremstyle}
        \theoremseparator{\thmForceSepPt}
        \theoremprework{\ra@pretheoremwork}
        \@ifundefined{#1@withName}{\newtheorem{#1@withName}[#4]{#2}}{\renewtheorem{#1@withName}[#4]{#2}}
        \theoremstyle{nonumberplain}
        \theoremseparator{\thmForceSepPt}
        \theoremprework{\ra@pretheoremwork}
        \@ifundefined{#1@star@basic}{\newtheorem{#1@star@basic}[#4]{#2}}{\renewtheorem{#1@star@basic}[#4]{#2}}
        \theoremstyle{nonumberplain}
        \theoremseparator{\thmForceSepPt}
        \theoremprework{\ra@pretheoremwork}
        \@ifundefined{#1@star@withName}{\newtheorem{#1@star@withName}[#4]{#2}}{\renewtheorem{#1@star@withName}[#4]{#2}}
        \umbauenenv{#1}{#3}[#4]
        \umbauenenv{#1@star}{#3}[#4]
        \rathmtransfer{#1@star}{#1*}
    }
    \def\umbauenenv#1#2[#3]{%
        \expandafter\def\csname #1\endcsname{\relax%
            \@ifnextchar[{\csname #1@\endcsname}{\csname #1@\endcsname[*]}%
        }
        \expandafter\def\csname #1@\endcsname[##1]{\relax%
            \@ifnextchar[{\csname #1@@\endcsname[##1]}{\csname #1@@\endcsname[##1][*]}%
        }
        \expandafter\def\csname #1@@\endcsname[##1][##2]{%
            \ifx*##1%
                \def\enndeOfBlock{\csname end#1@basic\endcsname}
                \csname #1@basic\endcsname%
            \else%
                \def\enndeOfBlock{\csname end#1@withName\endcsname}
                \csname #1@withName\endcsname[##1]%
            \fi%
            \def\makelabel####1{%
                \gdef\beweislabel{####1}%
                \label{\beweislabel}%
            }%
            \ifx*##2%
                \def\enndeSymbol{\qedEIGEN{#2}}
            \else%
                \def\enndeSymbol{\qedEIGEN{#2}[##2]}
            \fi
        }
        \expandafter\gdef\csname end#1\endcsname{\enndeSymbol\enndeOfBlock}
    }
        \def\current@theoremstyle{plain}
        \def\current@theoremseparator{\thmnumberingseppt}
        \theoremstyle{\current@theoremstyle}
    \def\behauptungbeleg@claim{%
        \iflanguage{british}{Claim}{%
        \iflanguage{english}{Claim}{%
        \iflanguage{ngerman}{Behauptung}{%
        \iflanguage{russian}{Утверждение}{%
        Claim%
        }}}}%
    }
    \def\behauptungbeleg@pf@kurz{%
        \iflanguage{british}{Pf}{%
        \iflanguage{english}{Pf}{%
        \iflanguage{ngerman}{Bew}{%
        \iflanguage{russian}{Доказательство}{%
        Pf%
        }}}}%
    }
    \def\behauptungbeleg{\@ifnextchar\bgroup{\behauptungbeleg@c}{\behauptungbeleg@bes}}
            \def\behauptungbeleg@c#1{\item[{\bfseries \behauptungbeleg@claim\erlaubeplatz #1.}]}
            \def\behauptungbeleg@bes{\item[{\bfseries \behauptungbeleg@claim.}]}
        \def\belegbehauptung{\item[{\bfseries\itshape\behauptungbeleg@pf@kurz.}]}
    \newcolumntype{\RECHTS}[1]{>{\raggedleft}p{#1}}
    \newcolumntype{\LINKS}[1]{>{\raggedright}p{#1}}
    \newcolumntype{m}{>{$}l<{$}}
    \newcolumntype{C}{>{$}c<{$}}
    \newcolumntype{L}{>{$}l<{$}}
    \newcolumntype{R}{>{$}r<{$}}
    \newcolumntype{0}{@{\hspace{0pt}}}
    \newcolumntype{\LINKSRAND}{@{\hspace{\@totalleftmargin}}}
    \newcolumntype{h}{@{\extracolsep{\fill}}}
    \newcolumntype{i}{>{\itshape}}
    \newcolumntype{t}{@{\hspace{\tabcolsep}}}
    \newcolumntype{q}{@{\hspace{1em}}}
    \newcolumntype{n}{@{\hspace{-\tabcolsep}}}
    \newcolumntype{M}[2]{%
        >{\begin{minipage}{#2}\begin{math}}%
        {#1}%
        <{\end{math}\end{minipage}}%
    }
    \newcolumntype{T}[2]{%
        >{\begin{minipage}{#2}}%
        {#1}%
        <{\end{minipage}}%
    }
    \def\punkteumgebung@genbefehl#1#2#3{
        \punkteumgebung@genbefehl@{#1}{#2}{#3}{}{}
        \punkteumgebung@genbefehl@{multi#1}{#2}{#3}{
            \setlength{\columnsep}{10pt}%
            \setlength{\columnseprule}{0pt}%
            \begin{multicols}{\thecolumnanzahl}%
        }{\end{multicols}\nvraum{1}}
    }
    \def\punkteumgebung@genbefehl@#1#2#3#4#5{
        \expandafter\gdef\csname #1\endcsname{
            \@ifnextchar\bgroup{\csname #1@c\endcsname}{\csname #1@bes\endcsname}
        }
            \expandafter\def\csname #1@c\endcsname##1{
                \@ifnextchar[{\csname #1@c@\endcsname{##1}}{\csname #1@c@\endcsname{##1}[\z@]}
            }
            \expandafter\def\csname #1@c@\endcsname##1[##2]{
                \@ifnextchar[{\csname #1@c@@\endcsname{##1}[##2]}{\csname #1@c@@\endcsname{##1}[##2][\z@]}
            }
            \expandafter\def\csname #1@c@@\endcsname##1[##2][##3]{
                \let\alterlinkerRand\gesamtlinkerRand
                \let\alterrechterRand\gesamtrechterRand
                \addtolength{\gesamtlinkerRand}{##2}
                \addtolength{\gesamtrechterRand}{##3}
                \advance\linewidth -##2%
                \advance\linewidth -##3%
                \advance\@totalleftmargin ##2%
                \parshape\@ne \@totalleftmargin\linewidth%
                #4
                \begin{#2}[\upshape ##1]%
                    \setlength{\parskip}{0.5\baselineskip}\relax%
                    \setlength{\topsep}{\z@}\relax%
                    \setlength{\partopsep}{\z@}\relax%
                    \setlength{\parsep}{\parskip}\relax%
                    \setlength{\itemsep}{#3}\relax%
                    \setlength{\listparindent}{\z@}\relax%
                    \setlength{\itemindent}{\z@}\relax%
            }
            \expandafter\def\csname #1@bes\endcsname{
                \@ifnextchar[{\csname #1@bes@\endcsname}{\csname #1@bes@\endcsname[\z@]}
            }
            \expandafter\def\csname #1@bes@\endcsname[##1]{
                \@ifnextchar[{\csname #1@bes@@\endcsname[##1]}{\csname #1@bes@@\endcsname[##1][\z@]}
            }
            \expandafter\def\csname #1@bes@@\endcsname[##1][##2]{
                \let\alterlinkerRand\gesamtlinkerRand
                \let\alterrechterRand\gesamtrechterRand
                \addtolength{\gesamtlinkerRand}{##1}
                \addtolength{\gesamtrechterRand}{##2}
                \advance\linewidth -##1%
                \advance\linewidth -##2%
                \advance\@totalleftmargin ##1%
                \parshape\@ne \@totalleftmargin\linewidth%
                #4
                \begin{#2}%
                    \setlength{\parskip}{0.5\baselineskip}\relax%
                    \setlength{\topsep}{\z@}\relax%
                    \setlength{\partopsep}{\z@}\relax%
                    \setlength{\parsep}{\parskip}\relax%
                    \setlength{\itemsep}{#3}\relax%
                    \setlength{\listparindent}{\z@}\relax%
                    \setlength{\itemindent}{\z@}\relax%
            }
        \expandafter\gdef\csname end#1\endcsname{%
            \end{#2}#5
            \setlength{\gesamtlinkerRand}{\alterlinkerRand}
            \setlength{\gesamtlinkerRand}{\alterrechterRand}
        }
    }
    \def\ritempunkt{{\Large \textbullet}} 
    \setdefaultitem{\ritempunkt}{\ritempunkt}{\ritempunkt}{\ritempunkt}
    \renewenvironment{thebibliography}[1]{%
        \begin{ALTthebibliography}{#1}
        \addcontentsline{toc}{part}{\bibname}
    }{%
        \end{ALTthebibliography}
    }
    \def\matrix#1{\left(\begin{array}[mc]{#1}}
        \def\endmatrix{\end{array}\right)}
    \def\smatrix{\left(\begin{smallmatrix}}
        \def\endsmatrix{\end{smallmatrix}\right)}
    \def\multiargrekursiverbefehl#1#2#3#4#5#6#7#8{%
        \expandafter\gdef\csname#1\endcsname #2##1#4{\csname #1@anfang\endcsname##1#3\egroup}
        \expandafter\def\csname #1@anfang\endcsname##1#3{#5##1\@ifnextchar\egroup{\csname #1@ende\endcsname}{#7\csname #1@mitte\endcsname}}
        \expandafter\def\csname #1@mitte\endcsname##1#3{#6##1\@ifnextchar\egroup{\csname #1@ende\endcsname}{#7\csname #1@mitte\endcsname}}
        \expandafter\def\csname #1@ende\endcsname##1{#8}
    }
    \def\underbracenodisplay#1{%
        \mathop{\vtop{\m@th\ialign{##\crcr
        $\hfil\displaystyle{#1}\hfil$\crcr
        \noalign{\kern3\p@\nointerlineskip}%
        \upbracefill\crcr\noalign{\kern3\p@}}}}\limits%
    }
    \def\mathe[#1]#2{%
        \ifthenelse{\equal{\boolinmdframed}{\boolwahr}}{}{\begin{escapeeinzug}}
        \noindent%
        \let\eqtagset\boolfalsch
        \let\eqtaglabel\boolleer
        \let\eqtagsymb\boolleer
        \let\alteqtag\eqtag
        \def\eqtag{\@ifnextchar[{\eqtag@loc@}{\eqtag@loc@[*]}}%
        \def\eqtag@loc@[##1]{\@ifnextchar\bgroup{\eqtag@loc@@[##1]}{\eqtag@loc@@[##1]{}}}%
        \def\eqtag@loc@@[##1]##2{%
            \gdef\eqtagset{\boolwahr}
            \gdef\eqtaglabel{##1}
            \gdef\eqtagsymb{##2}
        }%
        \def\verticalalign{}%
            \IfBeginWith{#1}{t}{\def\verticalalign{t}}{}%
            \IfBeginWith{#1}{m}{\def\verticalalign{c}}{}%
            \IfBeginWith{#1}{b}{\def\verticalalign{b}}{}%
        \def\horizontalalign{\null\hfill\null}%
            \IfEndWith{#1}{l}{}{\null\hfill\null}%
            \IfEndWith{#1}{r}{\def\horizontalalign{}}{}%
        \begin{math}
        \begin{array}[\verticalalign]{0#2}%
    }
        \def\endmathe{%
            \end{array}
            \end{math}\horizontalalign%
            \let\eqtag\alteqtag
            \ifthenelse{\equal{\eqtagset}{\boolwahr}}{\eqtag[\eqtaglabel]{\eqtagsymb}}{}
            \ifthenelse{\equal{\boolinmdframed}{\boolwahr}}{}{\end{escapeeinzug}}%
        }
    \def\longmathe[#1]#2{\relax
        \let\altarraystretch\arraystretch
        \renewcommand\arraystretch{1.2}\relax
        \begin{longtable}[#1]{\LINKSRAND #2}
    }
        \def\endlongmathe{
            \end{longtable}
            \renewcommand\arraystretch{\altarraystretch}
        }
    \def\einzug{\@ifnextchar[{\indents@}{\indents@[\z@]}}
        \def\indents@[#1]{\@ifnextchar[{\indents@@[#1]}{\indents@@[#1][\z@]}}
        \def\indents@@[#1][#2]{%
            \begin{list}{}{\relax
                \setlength{\topsep}{\z@}\relax
                \setlength{\partopsep}{\z@}\relax
                \setlength{\parsep}{\parskip}\relax
                \setlength{\listparindent}{\z@}\relax
                \setlength{\itemindent}{\z@}\relax
                \setlength{\leftmargin}{#1}\relax
                \setlength{\rightmargin}{#2}\relax
                \let\alterlinkerRand\gesamtlinkerRand
                \let\alterrechterRand\gesamtrechterRand
                \addtolength{\gesamtlinkerRand}{#1}
                \addtolength{\gesamtrechterRand}{#2}
            }\relax
                \item[]\relax
        }
            \def\endeinzug{%
                \setlength{\gesamtlinkerRand}{\alterlinkerRand}
                \setlength{\gesamtlinkerRand}{\alterrechterRand}
                \end{list}%
            }
    \def\escapeeinzug{\begin{einzug}[-\gesamtlinkerRand][-\gesamtrechterRand]}
        \def\endescapeeinzug{\end{einzug}}
    \def\programmiercode{
        \modulolinenumbers[1]
        \begin{einzug}[\rtab][\rtab]%
        \begin{linenumbers}%
            \fontfamily{cmtt}\fontseries{m}\fontshape{u}\selectfont%
            \setlength{\parskip}{1\baselineskip}%
            \setlength{\parindent}{0pt}%
    }
        \def\endprogrammiercode{
            \end{linenumbers}
            \end{einzug}
        }
    \def\schattiertebox@genbefehl#1#2#3{
        \expandafter\gdef\csname #1\endcsname{%
            \@ifnextchar[{\csname #1@args\endcsname}{\csname #1@args\endcsname[#3]}
        }
            \expandafter\def\csname #1@args\endcsname[##1]{%
                \@ifnextchar[{\csname #1@args@l\endcsname[##1]}{\csname #1@args@n\endcsname[##1]}
            }
            \expandafter\def\csname #1@args@l\endcsname[##1][##2]{%
                \@ifnextchar[{\csname #1@args@l@r\endcsname[##1][##2]}{\csname #1@args@l@n\endcsname[##1][##2]}
            }
            \expandafter\def\csname #1@args@n\endcsname[##1]{%
                \let\boolinmdframed\boolwahr
                \begin{mdframed}[#2leftmargin=0,rightmargin=0,outermargin=0,innermargin=0,##1]
            }
            \expandafter\def\csname #1@args@l@n\endcsname[##1][##2]{%
                \let\boolinmdframed\boolwahr
                \begin{mdframed}[#2leftmargin=##2/2,rightmargin=##2/2,outermargin=##2/2,innermargin=##2/2,##1]
            }
            \expandafter\def\csname #1@args@l@r\endcsname[##1][##2][##3]{%
                \let\boolinmdframed\boolwahr
                \begin{mdframed}[#2leftmargin=##2,rightmargin=##3,outermargin=##2,innermargin=##3,##1]
            }
        \expandafter\gdef\csname end#1\endcsname{%
            \end{mdframed}
            \let\boolinmdframed\boolfalsch
        }
    }
    \def\tikzsetzepfeil#1{%
        \begin{tikzpicture}[remember picture,overlay,>=latex]%
            \draw #1;%
        \end{tikzpicture}%
    }
    \def\tikzsetzekreise[#1]#2#3{%
        \tikzsetzepfeil{%
        [rounded corners,#1]%
            ([shift={(-\tabcolsep,0.75\baselineskip)}]#2)%
            rectangle%
            ([shift={(\tabcolsep,-0.5\baselineskip)}]#3)
        }%
    }
    \tikzset{
        >=stealth,
        auto,
        node distance=1cm,
        thick,
        main node/.style={
            circle,draw,font=\sffamily\Large\bfseries,minimum size=0pt
        },
        state/.style={minimum size=0pt}
        loop above right/.style={loop,out=30,in=60,distance=0.5cm},
        loop above left/.style={above left,out=150,in=120,loop},
        loop below right/.style={below right,out=330,in=300,loop},
        loop below left/.style={below left,out=240,in=210,loop},
        itria/.style={
            draw,dashed,shape border uses incircle,
            isosceles triangle,shape border rotate=90,yshift=-1.45cm
        },
        rtria/.style={
            draw,dashed,shape border uses incircle,
            isosceles triangle,isosceles triangle apex angle=90,
            shape border rotate=-45,yshift=0.2cm,xshift=0.5cm
        },
        ritria/.style={
            draw,dashed,shape border uses incircle,
            isosceles triangle,isosceles triangle apex angle=110,
            shape border rotate=-55,yshift=0.1cm
        },
        litria/.style={
            draw,dashed,shape border uses incircle,
            isosceles triangle,isosceles triangle apex angle=110,
            shape border rotate=235,yshift=0.1cm
        }
    }
\def\oBall#1_#2{\cal{B}_{#2}(#1)}
\def\clBall#1_#2{\quer{\cal{B}}_{#2}(#1)}
\def\Cts{\@ifnextchar_{\Cts@tief}{\Cts@tief_{}}}
    \def\Cts@tief_#1#2{\@ifnextchar\bgroup{\Cts@two_{#1}{#2}}{\Cts@one_{#1}{#2}}}
    \def\Cts@one_#1#2{C_{#1}^{0}\big(#2\big)}
    \def\Cts@two_#1#2#3{C_{#1}^{0}\big(#2,~#3\big)}
\def\reals{\mathbb{R}}
\def\rtnl{\mathbb{Q}}
\def\intgr{\mathbb{Z}}
\def\onematrix{\text{\upshape\bfseries I}}
\def\symmdiff{\mathbin{\Delta}}
\def\UpperTrUnit{\mathop{\mathrm{UT}_{1}}}
\def\Heisenberg_#1{\mathop{\mathrm{H}_{#1}}}
\def\ntrlpos{\mathbb{N}}
\def\realsNonNeg{\reals_{+}}
\def\leer{\emptyset}
\def\BRAKET#1#2{\langle{}#1,~#2{}\rangle}
\def\dee{\textup{d}}
\def\einser{\text{\textbf{1}}}
\def\C0{\ensuremath{C_{0}}}
\def\ohne{\setminus}
\def\eps{\varepsilon}
\let\altphi\phi
\let\altvarphi\varphi
    \def\phi{\altvarphi}
    \def\varphi{\altphi}
\def\quer#1{\overline{#1}}
\def\lim{\mathop{\ell\mathrm{im}}}
\def\co{\mathop{\textit{co}}}
\def\BoundedOps#1{\@ifnextchar\bgroup{\BoundedOps@two{#1}}{\mathop{\mathfrak{L}}(#1)}}
    \def\BoundedOps@two#1#2{\mathop{\mathfrak{L}}(#1,#2)}
\def\RaumX{X}
\def\hardSigma02{\cal{Q}}
    \def\topWOT{\text{\upshape \scshape wot}}
    \def\topSOT{\text{\upshape \scshape sot}}
\renewcommand{\arraystretch}{1}
\def\firstparagraph{\noindent}
\def\continueparagraph{\noindent}
    \def\theunitnamesection{\thesection}
    \def\sectionname{}
    \let\appendix@orig\appendix
    \def\appendix{%
        \appendix@orig%
        \let\boolinappendix\boolwahr
        \addcontentsline{toc}{part}{\appendixname}%
        \addtocontents{toc}{\protect\setcounter{tocdepth}{0}}
        \def\sectionname{Appendix}%
        \def\theunitnamesection{\Alph{section}}%
    }
    \def\notappendix{%
        \let\boolinappendix\boolfalse
        \addtocontents{toc}{\protect\setcounter{tocdepth}{1 }}
        \def\sectionname{}%
        \def\theunitnamesection{\arabic{section}}%
    }
    \def\@seccntformat#1{%
        \protect\textup{%
            \protect\@secnumfont
            \expandafter\protect\csname format#1\endcsname%
            \csname the#1\endcsname
            \expandafter\protect\csname format#1@pt\endcsname%
            \space
        }%
    }
    \def\formatsection@text{\centering\Large\scshape}
    \def\formatsection@pt{\secnumberingseppt}
    \def\section{\@startsection{section}{1}{\z@}{.7\linespacing\@plus\linespacing}{.5\linespacing}{\formatsection@text}}
    \def\formatsubsection@text{\flushleft\bfseries\scshape}
    \def\formatsubsection@pt{\subsecnumberingseppt}
    \def\subsection{\@startsection{subsection}{2}{\z@}{\z@}{\z@\hspace{1em}}{\formatsubsection@text}}
\def\rafootnotectr{20}
\def\incrftnotectr#1{%
    \addtocounter{#1}{1}%
    \ifnum\value{#1}>\rafootnotectr\relax
        \setcounter{#1}{0}%
    \fi%
}
\def\footnoteref[#1]{\protected@xdef\@thefnmark{\ref{#1}}\@footnotemark}
\let\altfootnotetext\footnotetext
    \def\footnotetext[#1]#2{\incrftnotectr{footnote}\altfootnotetext[\value{footnote}]{\label{#1}#2}}
    \def\footnotemark[#1]{\text{\textsuperscript{\getrefnumber{#1}}}}
\def\kopfzeiledefault{
    \lhead[]{}
    \lhead[]{}
    \chead[]{}
    \rhead[]{}
    \lfoot[]{}
    \cfoot{\footnotesize\thepage}
    \rfoot[]{}
}
\def\aktuellesfont{\rmfamily}
\def\documentfont{%
    \gdef\aktuellesfont{\rmfamily}%
    \fontfamily{cmr}\fontseries{m}\selectfont%
    \renewcommand{\sfdefault}{phv}%
    \renewcommand{\ttdefault}{pcr}%
    \renewcommand{\rmdefault}{cmr}
    \renewcommand{\bfdefault}{bx}%
    \renewcommand{\itdefault}{it}%
    \renewcommand{\sldefault}{sl}%
    \renewcommand{\scdefault}{sc}%
    \renewcommand{\updefault}{n}%
}
\def\startdocumentlayoutoptions{
    \selectlanguage{british}
    \setlength{\parskip}{0.25\baselineskip}
    \setlength{\parindent}{2em}
    \kopfzeiledefault
    \documentfont
    \normalsize
}
\def\highlightTerm#1{\emph{#1}}
\def\@adminfootnotes{%
    \let\@makefnmark\relax
    \let\@thefnmark\relax
    \ifx\@empty\@date\else%
        \@footnotetext{\@setdate}%
    \fi%
    \ifx\@empty\@subjclass\else%
        \@footnotetext{\@setsubjclass}%
    \fi
    \ifx\@empty\@keywords\else%
        \@footnotetext{\@setkeywords}%
    \fi
    \ifx\@empty\thankses\else%
        \@footnotetext{\def\par{\let\par\@par}\@setthanks}%
    \fi
}
\def\@settitle{\Large\bfseries\scshape\@title}
\def\@maketitle{%
  \normalfont\normalsize
  \@adminfootnotes
  \@mkboth{\@nx\shortauthors}{\@nx\shorttitle}%
  \global\topskip42\p@\relax
  {\centering\@settitle}
  \ifx\@empty\authors\else{\centering\small\@setauthors}\fi
  \ifx\@empty\@date\else{\vtop{\centering\small\@date\@@par}}\fi
  \ifx\@empty\@dedicatory%
  \else%
    \baselineskip\p@
    \vtop{\centering{\footnotesize\itshape\@dedicatory\@@par}%
    \global\dimen@i\prevdepth}\prevdepth\dimen@i%
  \fi
  \@setabstract
  \normalsize
  \if@titlepage
    \newpage
  \else
    \dimen@34\p@\advance\dimen@-\baselineskip
  \fi
}
\def\addresseshere{%
  \bgroup
  \setlength{\parindent}{0pt}
  \enddoc@text
  \egroup
  \let\enddoc@text\relax
}
\begin{document}
    \startdocumentlayoutoptions

    \thispagestyle{plain}



\def\abstractname{Abstract}
\begin{abstract}
    It is well known that weakly continuous semigroups
    defined over $\realsNonNeg$ are automatically strongly continuous.
    We extend this result to more generally defined semigroups,
    including multiparameter semigroups.
\end{abstract}


\subjclass[2020]{47D06, 91A44}
\keywords{Semigroups of operators, multiparameter semigroups, strong continuity, weak continuity, monoids.}
\title[On the strong continuity of generalised semigroups]{On the strong continuity of generalised semigroups}
\author{Raj Dahya}
\email{raj.dahya@web.de}
\address{%
Fakult\"at f\"ur Mathematik und Informatik\newline
Universit\"at Leipzig, Augustusplatz 10, D-04109 Leipzig, Germany
}

\maketitle

    \setcounternach{section}{1}



\section[Introduction]{Introduction}
    \label{sec:intro}

\firstparagraph
By a well-known result, under certain basic conditions, semigroups over Banach spaces are automatically continuous \wrt the strong operator topology ($\topSOT$).
Engel und Nagel proved this in
    \cite[Theorem~5.8]{Engel1999}
under the assumption of continuity \wrt the weak operator topology ($\topWOT$).
In that reference and here, semigroups are ordinarily defined over $\realsNonNeg$.
Specifically, one considers operator-valued functions,

    \begin{mathe}[mc]{rcccl}
        T &: &\realsNonNeg &\to &\BoundedOps{E},\\
    \end{mathe}

\continueparagraph
where $E$ is a Banach space, and $T$ satisfies
${T(0)=\onematrix}$ and ${T(s+t)=T(s)T(t)}$ for all ${s,t\in\realsNonNeg}$.
In other words, semigroups are nothing other than morphisms
between the monoids ${(\realsNonNeg,+,0)}$ and ${(\BoundedOps{E},\circ,\onematrix)}$.

Now, for our purposes, there is no particular reason to focus on semigroups over $\realsNonNeg$,
also known as \highlightTerm{one-parameter semigroups}.
A natural abstraction is to define semigroups over topological monoids.
In this note, we shall define a broad class of semigroups,
including ones defined over $\realsNonNeg^{d}$ for $d\geq 1$,
\idest \highlightTerm{multiparameter semigroups},
and to which we generalise the automatic continuity proof in
    \cite{Engel1999}.

Our generalisation may also be of interest to other fields.
For example, multiparameter semigroups occur in
the study of diffusion equations in space-time dynamics
(see \exempli \cite{zelik2004})
and the approximation of periodic functions in multiple variables
(see \exempli \cite{terehin1975}).


\section[Definitions]{Definitions}
    \label{sec:definitions}

\firstparagraph
Note that no assumptions about commutativity shall be made,
and hence monoids and groups shall be expressed multiplicatively.
We define generalised semigroups as follows.

\begin{defn}
    A \highlightTerm{semigroup} over a Banach space, $E$, defined over a monoid, $M$,
    shall mean any operator-valued function, ${T:M\to\BoundedOps{E}}$,
    satisfying ${T(1)=\onematrix}$ and ${T(st)=T(s)T(t)}$
    for $s,t\in M$.
\end{defn}

We now define a large class of monoids
to which the classical continuity result shall be generalised.

\begin{defn}
    Let $M$ be a locally compact Hausdorff topological monoid.
    We shall call $M$ \highlightTerm{extendible},
    if there exists a locally compact Hausdorff topological group, $G$,
    such that $M$ is topologically and algebraically isomorphic to a closed subset of $G$.
\end{defn}

If $M$ is extendible to $G$ via the above definition,
then one can assume without loss of generality that $M\subseteq G$.

\begin{defn}
    Let $G$ be a locally compact Hausdorff group.
    We shall call a subset $A\subseteq G$ \highlightTerm{positive in the identity},
    if for all neighbourhoods, $U\subseteq G$, of the group identity,
    $U\cap A$ has non-empty interior within $G$.
\end{defn}

\begin{e.g.}[The non-negative reals]
\makelabel{e.g.:extendible-mon:reals:sig:article-auto-continuity-raj-dahya}
    Consider ${M \colonequals \realsNonNeg}$ viewed under addition.
    Since ${M\subseteq\reals}$ is closed,
    we have that $M$ is an extendible locally compact Hausdorff monoid.
    For any open neighbourhood, ${U\subseteq\reals}$, of the identity,
    there exists an ${\eps>0}$, such that ${(-\eps,\eps)\subseteq U}$
    and thus ${U\cap M\supseteq(0,\eps)\neq\leer}$.
    Hence $M$ is positive in the identity.
\end{e.g.}

\begin{e.g.}[The $p$-adic integers]
\makelabel{e.g.:extendible-mon:p-adic:sig:article-auto-continuity-raj-dahya}
    Consider ${M \colonequals \mathbb{Z}_{p}}$ with ${p\in\mathbb{P}}$,
    viewed under addition and with the topology generated by the $p$-adic norm.
    Since ${M\subseteq\rtnl_{p}}$ is clopen,
    we have that $M$ is an extendible locally compact Hausdorff monoid.
    Since $M$ is clopen, it is clearly positive in the identity.
\end{e.g.}

\begin{e.g.}[Discrete cases]
    Let $G$ be a discrete group, and let $M\subseteq G$ contain the identity and be closed under group multiplication.
    Clearly, $M$ is a locally compact Hausdorff monoid, extendible to $G$ and positive in the identity.
    For example one can take the free-group $\mathbb{F}_{2}$ with generators $\{a,b\}$,
    and $M$ to be the closure of $\{1,a,b\}$ under multiplication.
\end{e.g.}

\begin{e.g.}[Non-discrete, non-commutative cases]
    Let $d\in\ntrlpos$ with $d>1$ and consider the space, $\RaumX$, of $\reals$-valued $d\times d$ matrices.
    Topologised with any matrix norm (equivalently the strong or the weak operator topologies),
    this space is homeomorphic to $\reals^{d^{2}}$ and thus locally compact Hausdorff.
    Since the determinant map ${X\ni T\mapsto \det(T)\in\reals}$ is continuous,
    the subspace of invertible matrices $\{T\in\RaumX\mid \det(T)\neq 0\}$
    is open and thus a locally compact Hausdorff topological group.
    Now the subspace, $G$, of upper triangular matrices with positive diagonal entries,
    is a closed subgroup and thus locally compact Hausdorff.
    Letting

        \begin{mathe}[mc]{rcl}
            G_{0} &\colonequals &\{T\in G\mid \det(T)=1\},\\
            G_{+} &\colonequals &\{T\in G\mid \det(T)>1\},~\text{and}\\
            G_{-} &\colonequals &\{T\in G\mid \det(T)<1\},\\
        \end{mathe}

    \continueparagraph
    it is easy to see that $M \colonequals G_{0}\cup G_{+}$ is a topologically closed subspace containing the identity and is closed under multiplication.
    Moreover $M$ is a proper monoid, since the inverses of the elements in $G_{+}$ are clearly in ${G\ohne M}$.
    Consider now an open neighbourhood, ${U\subseteq G}$, of the identity.
    Since inversion is continuous, $U^{-1}$ is also an open neighbourhood of the identity.
    Since, as a locally compact Hausdorff space, $G$ satisfies the Baire category theorem,
    and since ${G_{+}\cup G_{-}}$ is clearly dense (and open) in $G$, and thus comeagre,
    we clearly have $(U\cap U^{-1})\cap(G_{+}\cup G_{-})\neq\leer$.
    So either ${U\cap G_{+}\neq\leer}$ or else ${U^{-1}\cap G_{-}\neq\leer}$,
    from which it follows that ${U\cap G_{+}=(U^{-1}\cap G_{-})^{-1}\neq\leer}$.
    Hence in each case ${U\cap M}$ contains a non-empty open subset, \viz ${U\cap G_{+}}$.
    So $M$ is extendible to $G$ and positive in the identity.

    Next, consider the subgroup, ${G_{h} \subseteq G}$,
    consisting of matrices of the form $T=\onematrix+E$ where $E$ is a strictly upper triangular matrix
    with at most non-zero entries on the top row and right hand column.
    That is, $G_{h}$ is the \highlightTerm{continuous Heisenberg group}, $\Heisenberg_{2d-3}(\reals)$, of order ${2d-3}$.
    The elements of the Heisenberg group occur in the study of Kirillov's \highlightTerm{orbit method}
        (\cf \cite{kirillov1962})
    and have important applications in physics
        (see \exempli \cite{kirillov2003}).
    Clearly, $G_{h}$ is topologically closed within $G$ and thus locally compact Hausdorff.
    Now consider the subspace,

        \begin{mathe}[mc]{rcl}
            M_{h} &\colonequals &\{T\in G_{h}\mid \forall{i,j\in\{1,2,\ldots,d\}:~}T_{ij}\geq 0\},\\
        \end{mathe}

    \continueparagraph
    of matrices with only non-negative entries.
    This is clearly a topologically closed subspace of $G_{h}$ containing the identity and closed under multiplication.
    Moreover, if ${S,T\in M_{h}\ohne\{\onematrix\}}$ we clearly have

        \begin{mathe}[mc]{rcccl}
            ST &= &\onematrix + \big((S-\onematrix) + (T-\onematrix) + (S-\onematrix)(T-\onematrix)\big)
              &\in &M_{h}\ohne\{\onematrix\},\\
        \end{mathe}

    \continueparagraph
    which implies that no non-trivial element in $M_{h}$ has its inverse in $M_{h}$, making $M_{h}$ a proper monoid.
    Consider now an open neighbourhood, ${U\subseteq G_{h}}$, of the identity.
    Since $G_{h}$ is homeomorphic to $\reals^{2d-3}$,
    there exists some ${\eps>0}$, such that

        \begin{mathe}[mc]{rcl}
            U &= &\{
                T\in G_{h}
                \mid
                \forall{(i,j)\in\mathcal{I}:~}T_{ij}\in(-\eps,\eps)
            \}.\\
        \end{mathe}

    \continueparagraph
    where $\mathcal{I} \colonequals \{(1,2),(1,3),\ldots,(1,d),(2,d),\ldots,(d-1,d)\}$.
    Hence

        \begin{mathe}[mc]{rcl}
            U\cap M_{h} &\supseteq &\{
                T\in G_{h}
                \mid
                \forall{(i,j)\in\mathcal{I}:~}T_{ij}\in(0,\eps)
            \}=:V,\\
        \end{mathe}

    \continueparagraph
    where $V$ is clearly a non-empty open subset of $G_{h}$,
    since the $2d-3$ entries in the matrices can be freely and independently chosen.
    Thus $M_{h}$ is extendible to $G_{h}$ and positive in the identity.

    Finally, we may consider the subgroup, ${G_{u} \colonequals \UpperTrUnit(d)}$, of upper triangular matrices over $\reals$ with unit diagonal.
    The elements of $\UpperTrUnit(d)$ have important applications in image analysis
    (see \exempli
        \cite{kirillov2003}
        and
        \cite[\S{}5.5.2]{pennec2020}%
    )
    and representations of the group have been studied in
        \cite[Chapter~6]{samoilenko1991}.
    Setting ${M_{u} \colonequals \{T\in G_{u}\mid \forall{i,j\in\{1,2,\ldots,d\}:~}T_{ij}\geq 0\}}$,
    one may argue similarly to the case of continuous Heisenberg group
    and show $G_{u}$ is locally compact
    and that $M_{u}$ is a proper topological monoid
    which is furthermore extendible to $G_{u}$ and positive in the identity.
\end{e.g.}

The following result allows us to generate more examples from basic ones:

\begin{prop}
\makelabel{prop:products-of-good-are-good:sig:article-auto-continuity-raj-dahya}
    Let ${n\in\ntrlpos}$ and let $M_{i}$ be locally compact Hausdorff monoids for ${1\leq i\leq n}$.
    Assume for each ${i<n}$ that $M_{i}$ is extendible to a locally compact Hausdorff group $G_{i}$,
    and that $M_{i}$ is positive in the identity of $G_{i}$.
    Then ${M \colonequals \prod_{i=1}^{n}M_{i}}$ is a locally compact Hausdorff monoid
    which is extendible to ${G \colonequals \prod_{i=1}^{n}G_{i}}$
    and positive in the identity.
\end{prop}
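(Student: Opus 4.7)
The plan is to verify the three assertions (that $M$ is a locally compact Hausdorff monoid, that it is extendible to $G$, and that it is positive in the identity) by direct appeal to the behaviour of finite products in topology. Throughout, I would work with the product topology on $G$ and the subspace topology on $M$.

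First, that $M$ is a locally compact Hausdorff monoid follows immediately from general principles: a finite product of locally compact Hausdorff spaces is locally compact Hausdorff, and coordinatewise multiplication makes $M$ into a topological monoid with identity $(1_{M_1},\ldots,1_{M_n})$. The same remarks yield that $G$ is a locally compact Hausdorff topological group.

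For extendibility, I would identify each $M_i$ with its (closed) image inside $G_i$, so that $M \subseteq G$ coordinatewise. Writing $\pi_i\colon G \to G_i$ for the coordinate projections, $M = \bigcap_{i=1}^{n} \pi_i^{-1}(M_i)$ is closed in $G$ as a finite intersection of preimages of closed sets under continuous maps, while the coordinatewise inclusion is clearly both a topological and an algebraic monoid isomorphism.

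For positivity in the identity, I would take an open neighbourhood $U \subseteq G$ of the identity, and use the definition of the product topology to extract open neighbourhoods $U_i \subseteq G_i$ of the respective identities with $\prod_{i=1}^{n} U_i \subseteq U$. By hypothesis, each $U_i \cap M_i$ has non-empty interior in $G_i$; picking a non-empty open $V_i \subseteq U_i \cap M_i$ in each coordinate, the set $\prod_{i=1}^{n} V_i$ is a non-empty open subset of $G$ contained in $U \cap M$, giving the required non-empty interior. No step is substantively difficult; the proposition is essentially a bookkeeping exercise in the product topology, the only mildly delicate point being the recognition that the basic open boxes of the product topology are exactly what is needed in order to invoke the positivity hypothesis coordinatewise.
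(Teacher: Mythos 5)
Your proposal is correct and follows essentially the same route as the paper's own proof: the paper likewise dismisses extendibility as clear and establishes positivity in the identity by shrinking $U$ to a basic open box $\prod_{i=1}^{n}U_{i}$ and taking the product of the non-empty open sets $V_{i}\subseteq U_{i}\cap M_{i}$ supplied by the hypothesis. Your write-up merely spells out the bookkeeping (closedness of $M$ in $G$, local compactness of finite products) that the paper leaves implicit.
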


    \begin{proof}
        The extendibility of $M$ to $G$ is clear.
        Now consider an arbitrary open neighbourhood, $U$, of the identity in $G$.
        For each ${1\leq i\leq n}$, one can find open neighbourhoods, $U_{i}$, of the identity in $G_{i}$,
        so that ${U' \colonequals \prod_{i=1}^{n}U_{i}\subseteq U}$.
        By assumption, $M_{i}\cap U_{i}$ contains a non-empty open set,
        ${V_{i}\subseteq G_{i}}$ for each ${1\leq i\leq n}$.
        Since ${U\supseteq\prod_{i=1}^{n}U_{i}}$,
        it follows that
            $M\cap U
                \supseteq
                \prod_{i=1}^{n}(M_{i}\cap U_{i})
                \supseteq
                \prod_{i=1}^{n}V_{i}\neq\leer$.
        Thus ${M\cap U}$ has non-empty interior.
        Hence $M$ is positive in the identity.
    \end{proof}

Thus the class of monoids to which we can generalise the continuity result is infinite.
For example, by \Cref{prop:products-of-good-are-good:sig:article-auto-continuity-raj-dahya},
    \Cref{%
        e.g.:extendible-mon:reals:sig:article-auto-continuity-raj-dahya,%
        e.g.:extendible-mon:p-adic:sig:article-auto-continuity-raj-dahya%
    }
yield:

\begin{cor}
\makelabel{cor:example-multiparam-are-good:sig:article-auto-continuity-raj-dahya}
    Viewed under pointwise addition,
    $\realsNonNeg^{d}$, $\intgr_{p}^{d}$
    are extendible to locally compact Hausdorff groups
    and positive in the identities of the respective groups,
    for all ${d\in\ntrlpos}$ and ${p\in\mathbb{P}}$.
\end{cor}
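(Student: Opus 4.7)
The plan is to observe that this corollary is essentially a direct application of \Cref{prop:products-of-good-are-good:sig:article-auto-continuity-raj-dahya} to the two previously established building blocks, namely \Cref{e.g.:extendible-mon:reals:sig:article-auto-continuity-raj-dahya} and \Cref{e.g.:extendible-mon:p-adic:sig:article-auto-continuity-raj-dahya}. There is no real obstacle here; the work has already been done upstream.

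For the first case, I would fix $d\in\ntrlpos$ and take $M_{i}\colonequals\realsNonNeg$ together with $G_{i}\colonequals\reals$ for each ${1\leq i\leq d}$. By \Cref{e.g.:extendible-mon:reals:sig:article-auto-continuity-raj-dahya}, each $M_{i}$ is a locally compact Hausdorff monoid which is extendible to the locally compact Hausdorff group $G_{i}$ and positive in its identity. Applying \Cref{prop:products-of-good-are-good:sig:article-auto-continuity-raj-dahya}, the product monoid $\realsNonNeg^{d}=\prod_{i=1}^{d}M_{i}$ is locally compact Hausdorff, extendible to $\reals^{d}=\prod_{i=1}^{d}G_{i}$, and positive in the identity of $\reals^{d}$.

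For the second case, I would analogously fix ${p\in\mathbb{P}}$ and ${d\in\ntrlpos}$, set ${M_{i}\colonequals\intgr_{p}}$ and ${G_{i}\colonequals\rtnl_{p}}$ for each ${1\leq i\leq d}$, and invoke \Cref{e.g.:extendible-mon:p-adic:sig:article-auto-continuity-raj-dahya} to confirm the hypotheses of \Cref{prop:products-of-good-are-good:sig:article-auto-continuity-raj-dahya}. That proposition then yields that $\intgr_{p}^{d}$ is a locally compact Hausdorff monoid, extendible to the locally compact Hausdorff group $\rtnl_{p}^{d}$, and positive in the identity.

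Since both cases reduce to directly invoking an already proved proposition on known examples, the corollary follows without further argument. The only conceivable subtlety is ensuring that the product topology on $\reals^{d}$ (respectively $\rtnl_{p}^{d}$) really is the one we mean — but this is tautological from the way \Cref{prop:products-of-good-are-good:sig:article-auto-continuity-raj-dahya} is formulated — so nothing deserves to be called the \emph{main obstacle}.
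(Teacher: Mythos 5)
Your proposal is correct and matches the paper exactly: the paper derives this corollary immediately by applying \Cref{prop:products-of-good-are-good:sig:article-auto-continuity-raj-dahya} to \Cref{e.g.:extendible-mon:reals:sig:article-auto-continuity-raj-dahya,e.g.:extendible-mon:p-adic:sig:article-auto-continuity-raj-dahya}, offering no further argument. Your spelled-out version of the two instantiations ($M_i=\realsNonNeg$, $G_i=\reals$ and $M_i=\intgr_{p}$, $G_i=\rtnl_{p}$) is just a more explicit rendering of the same one-line deduction.
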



\section[Main result]{Main result}
    \label{sec:main-result}

\firstparagraph
We can now state and prove the generalisation.
Our argumentation builds on
    \cite[Theorem~5.8]{Engel1999}.

\begin{schattierteboxdunn}[backgroundcolor=leer,nobreak=true]
\begin{thm}
\makelabel{thm:generalised-auto-continuity:sig:article-auto-continuity-raj-dahya}
    Let $M$ be a locally compact Hausdorff monoid and $E$ a Banach space.
    Assume that $M$ is extendible to a locally compact Hausdorff group, $G$,
    and that $M$ is positive in the identity.
    Then all $\topWOT$-continuous semigroups, ${T:M\to\BoundedOps{E}}$,
    are automatically $\topSOT$-continuous.
\end{thm}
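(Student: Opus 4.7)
The plan is to generalise the argument of \cite[Theorem~5.8]{Engel1999} by replacing the Lebesgue integral on $\realsNonNeg$ with a left Haar integral on $G$, and by using the positive-in-identity hypothesis to build approximate identities supported inside $M$. First, I would establish local norm-boundedness: since each orbit $s\mapsto T(s)x$ is weakly continuous, the set $\{T(s)x : s\in K\}$ is weakly bounded for every compact $K\subseteq M$, hence norm-bounded by Banach--Steinhaus applied in $E^{*}$, and a second application of Banach--Steinhaus yields $\sup_{s\in K}\|T(s)\|<\infty$.

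Next, fix a left Haar measure $\mu$ on $G$. By the positive-in-identity hypothesis, every neighbourhood $U$ of the identity in $G$ contains a non-empty (in $G$) open subset $W\subseteq M\cap U$ with compact closure. The map $s\mapsto T(s)x$ being bounded and weakly continuous on the compact set $\overline{W}$, the weak (Pettis) integral $A_{W}x := \int_{W}T(s)x\,d\mu(s)\in E$ is well-defined. Exploiting left-invariance of $\mu$ together with the inclusion $sW\subseteq M$ (valid because $M$ is closed under multiplication and $W\subseteq M$), one has $T(s)A_{W}x=\int_{sW}T(u)x\,d\mu(u)$ for every $s\in M$. For any net $s_{\alpha}\to s_{0}$ in $M$, the estimate
\[
\|T(s_{\alpha})A_{W}x - T(s_{0})A_{W}x\| \leq C\cdot\mu(s_{\alpha}W\symmdiff s_{0}W),
\]
where $C$ majorises $\|T(u)x\|$ on a compact neighbourhood of $\bigcup_{\alpha}s_{\alpha}\overline{W}\cup s_0\overline{W}$, combined with $L^{1}$-continuity of left translation in $G$ applied to the indicator $1_{W}$, shows that $s\mapsto T(s)A_{W}x$ is norm-continuous at $s_{0}$.

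I would then prove that the family $\{A_{W}x : x\in E,\ W\neq\leer\text{ open in }M\}$ has norm-dense linear span in $E$. For each $\phi\in E^{*}$ and $x\in E$, the function $\phi\circ T(\cdot)x$ is continuous at $1$, so if the sets $W_{U}$ shrink to $\{1\}$ (i.e., every neighbourhood of $1$ eventually contains $W_{U}$), one obtains $\phi(\mu(W_{U})^{-1}A_{W_{U}}x)\to\phi(x)$. Hence $x$ lies in the weak closure of $\mathrm{span}\{A_{W}x : W\}$, which by Hahn--Banach coincides with the norm closure. A routine $\epsilon/3$ argument combining this density with the local boundedness from the first step and the strong continuity on the dense set now yields $\topSOT$-continuity of $T$ at every $s_{0}\in M$.

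The main obstacle is that $M$ may be a proper submonoid of $G$, so one cannot simply integrate over a symmetric ball around the identity as in the classical case. This is precisely where the positive-in-identity hypothesis is essential: it guarantees non-empty open subsets of $M$ arbitrarily close to $1$, whose Haar measures are positive, so the averaged operators $\mu(W)^{-1}A_{W}$ really do approximate the identity in the weak operator sense. A secondary subtlety is that WOT-continuity alone does not give strong (Bochner) measurability of the orbits, but since the smearing is performed on relatively compact sets where $T(\cdot)x$ is weakly continuous and bounded, Pettis integrability suffices and the integral lies in $E$ (in the closed convex hull of the image). Note that the argument produces two-sided continuity directly, without needing the one-sided trick used in the classical proof, because $sW$ varies continuously in $s$ with respect to the $L^{1}$-norm on $G$.
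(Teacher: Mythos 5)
Your proposal is correct and follows essentially the same route as the paper: local norm-boundedness via a double application of Banach--Steinhaus, weak (Pettis) averaging of $T(\cdot)x$ over small positive-measure subsets of $M$ near the identity supplied by the positive-in-identity hypothesis, norm-continuity of the averaged orbits via left-invariance of Haar measure and $L^{1}$-continuity of translation applied to an indicator function, and a density-plus-uniform-boundedness argument to pass to all of $E$. The only cosmetic differences are that the paper averages over $M\cap F$ for compact neighbourhoods $F$ and closes up the convex hull of the averages (via Mazur's theorem) rather than the linear span (via Hahn--Banach); both variants work equally well.
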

\end{schattierteboxdunn}

    \begin{proof}
        First note that the principle of uniform boundedness applied twice to the $\topWOT$-continuous function, $T$,
        ensures that $T$ is norm-bounded on all compact subsets of $M$.
        Fix now a left-invariant Haar measure, $\lambda$, on $G$ and set

            \begin{mathe}[mc]{rcl}
                S &\colonequals &\{F\subseteq G\mid F~\text{a compact neighbourhood of the identity}\}.\\
            \end{mathe}

        Consider arbitrary ${F\in S}$ and ${x\in E}$.
        By the closure of $M$ in $G$ as well as positivity in the identity,
        ${M\cap F}$ is compact and contains a non-empty open subset of $G$.
        It follows that ${0<\lambda(M\cap F)<\infty}$.
        The $\topWOT$-continuity of $T$, the compactness (and thus measurability) of ${M\cap F}$,
        and the norm-boundedness of $T$ on compact subsets
        ensure that

            \begin{mathe}[mc]{rcl}
            \eqtag[eq:defn-xF:\beweislabel]
                \BRAKET{x_{F}}{\phi}
                    &\colonequals &\frac{1}{\lambda(M\cap F)}
                            \displaystyle\int_{t\in M\cap F}
                            \BRAKET{T(t)x}{\phi}
                            ~\dee{}t,
                    \quad\text{for $\phi\in E^{\prime}$}\\
            \end{mathe}

        \continueparagraph
        describes a well-defined element $x_{F}\in E^{\prime\prime}$.
        Exactly as in
            \cite[Theorem~5.8]{Engel1999},
        one may now argue by the $\topWOT$-continuity of $T$ and compactness of ${M\cap F}$
        that in fact ${x_{F}\in E}$ for each ${x\in E}$ and ${F\in S}$.
        Moreover, since $M$ is locally compact, one can readily see that each $x\in E$
        can be weakly approximated by the net,
            $(x_{F})_{F\in S}$,
        ordered by inverse inclusion. So

            \begin{mathe}[mc]{rcl}
                D &\colonequals &\{x_{F}\mid x\in E,~F\in S\}\\
            \end{mathe}

        \continueparagraph
        is weakly dense in $E$.
        Since the weak and strong closures of any convex subset in a Banach space coincide
        (\cf \cite[Theorem~5.98]{aliprantis2005}),
        it follows that the convex hull, $\co(D)$, is strongly dense in $E$.

        Now, to prove the $\topSOT$-continuity of $T$,
        we need to show that

            \begin{mathe}[mc]{rcl}
            \eqtag[eq:map:\beweislabel]
                t\in M &\mapsto &T(t)x\in E\\
            \end{mathe}

        \continueparagraph
        is strongly continuous for all $x\in E$.
        Since $M$ is locally compact and $T$ is norm-bounded on compact subsets of $M$,
        the set of $x\in E$ such that \eqcref{eq:map:\beweislabel} is strongly continuous,
        is itself a strongly closed convex subset of $E$.
        So, since $\co(D)$ is strongly dense in $E$,
        it suffices to prove
        the strong continuity of \eqcref{eq:map:\beweislabel}
        for each ${x\in D}$.

        To this end, fix arbitrary ${x\in E}$, ${F\in S}$ and ${t\in M}$.
        We need to show that ${T(t')x_{F}\longrightarrow T(t)x_{F}}$
        strongly for ${t'\in M}$ as ${t'\longrightarrow t}$.

        First recall, that by basic harmonic analysis,
        the canonical \highlightTerm{left-shift},

            \begin{mathe}[mc]{rcccl}
                L &: &G &\to &\BoundedOps{L^{1}(G)},\\
            \end{mathe}

        \continueparagraph
        defined via ${(L_{t}f)(s)=f(t^{-1}s)}$ for ${s,t\in G}$ and $f\in L^{1}(G)$,
        is an $\topSOT$-continuous morphism
        (\cf
            \cite[Proposition~3.5.6 ($\lambda_{1}$--$\lambda_{4}$)]{reiter2000}%
        ).
        Now, by compactness, ${f \colonequals \einser_{M\cap F}\in L^{1}(G)}$
        and it is easy to see that
            ${\|L_{t'}f-L_{t}f\|_{1}=\lambda(t'(M\cap F)\symmdiff t(M\cap F))}$
        for ${t'\in M}$.
        The $\topSOT$-continuity of $L$ thus yields

            \begin{mathe}[mc]{rcl}
            \eqtag[eq:1:\beweislabel]
                \lambda(t'(M\cap F)\symmdiff t(M\cap F)) &\longrightarrow &0\\
            \end{mathe}

        \continueparagraph
        for ${t'\in M}$ as ${t'\longrightarrow t}$.

        Fix now a compact neighbourhood, ${K\subseteq G}$, of $t$.
        For ${t'\in M\cap K}$ and ${\phi\in E^{\prime}}$ one obtains

            \begin{mathe}[mc]{rcl}
                |\BRAKET{T(t')x_{F}-T(t)x_{F}}{\phi}|
                    &= &|\BRAKET{x_{F}}{T(t')^{\ast}\phi}-\BRAKET{x_{F}}{T(t)^{\ast}\phi}|\\
                    &= &\frac{1}{\lambda(M\cap F)}
                        \cdot\left|
                            \displaystyle\int_{s\in M\cap F}\textstyle
                            \BRAKET{T(s)x}{T(t')^{\ast}\phi}~\dee{}s
                        -
                            \displaystyle\int_{s\in M\cap F}\textstyle
                            \BRAKET{T(s)x}{T(t)^{\ast}\phi}~\dee{}s
                        \right|\\
                    &\multispan{2}{\text{by construction of $x_{F}$ in \eqcref{eq:defn-xF:\beweislabel}}}\\
                    &= &\frac{1}{\lambda(M\cap F)}
                        \cdot\left|
                            \displaystyle\int_{s\in M\cap F}\textstyle
                            \BRAKET{T(t's)x}{\phi}~\dee{}s
                        -
                            \displaystyle\int_{s\in M\cap F}\textstyle
                            \BRAKET{T(ts)x}{\phi}~\dee{}s
                        \right|\\
                    &\multispan{2}{\text{since $T$ is a semigroup}}\\
                    &= &\frac{1}{\lambda(M\cap F)}
                        \cdot\left|
                            \displaystyle\int_{s\in t'(M\cap F)}\textstyle
                            \BRAKET{T(s)x}{\phi}~\dee{}s
                        -
                            \displaystyle\int_{s\in t(M\cap F)}\textstyle
                            \BRAKET{T(s)x}{\phi}~\dee{}s
                        \right|\\
                    &\multispan{2}{\text{by left-invariance}}\\
                    &\leq &\frac{1}{\lambda(M\cap F)}
                            \displaystyle\int_{s\in t'(M\cap F)\symmdiff t(M\cap F)}\textstyle
                            |\BRAKET{T(s)x}{\phi}|~\dee{}s\\
                    &\leq &\frac{1}{\lambda(M\cap F)}
                        \cdot\displaystyle\sup_{s\in(M\cap K)(M\cap F)}\textstyle\|T(s)\|
                        \cdot\|x\|\cdot\|\phi\|
                        \cdot\lambda(t'(M\cap F)\symmdiff t(M\cap F))\\
                        &\multispan{2}{\text{since $t,t'\in M\cap K$}.}\\
            \end{mathe}

        Since $K' \colonequals (M\cap K)(M\cap F)$ is compact, and $T$ is uniformly bounded on compact sets (see above),
        it holds that ${C \colonequals \displaystyle\sup_{s\in K'}\|T(s)\|\textstyle<\infty}$.
        The above calculation thus yields

            \begin{mathe}[mc]{rcl}
            \eqtag[eq:2:\beweislabel]
                \|T(t')x_{F}-T(t)x_{F}\|
                &= &\displaystyle\sup\textstyle\{
                        |\BRAKET{T(s)x_{F}-T(t)x_{F}}{\phi}|
                    \mid
                        \phi\in E^{\prime},~\|\phi\|\leq 1
                    \}\\
                &\leq &\frac{1}{\lambda(M\cap F)}
                        \cdot C
                        \cdot\|x\|
                        \cdot\lambda(t'(M\cap F)\symmdiff t(M\cap F))\\
            \end{mathe}

        \continueparagraph
        for all $t'\in M$ sufficiently close to $t$.

        By \eqcref{eq:1:\beweislabel}, the right-hand side of \eqcref{eq:2:\beweislabel} converges to $0$
        and hence ${T(t')x_{F}\longrightarrow T(t)x_{F}}$
        strongly as ${t'\longrightarrow t}$.
        This completes the proof.
    \end{proof}

\Cref{thm:generalised-auto-continuity:sig:article-auto-continuity-raj-dahya}
applied to
\Cref{cor:example-multiparam-are-good:sig:article-auto-continuity-raj-dahya}
immediately yields:

\begin{cor}
    Let $d\in\ntrlpos$ and let $E$ be a Banach space.
    Then all $\topWOT$-continuous semigroups, ${T:\realsNonNeg^{d}\to\BoundedOps{E}}$,
    are automatically $\topSOT$-continuous.
\end{cor}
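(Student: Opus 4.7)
The plan is a one-line deduction: verify that $M \colonequals \realsNonNeg^{d}$ falls within the scope of the main theorem and then invoke it. All the structural work has already been done in the preceding results.

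First I would observe that \Cref{e.g.:extendible-mon:reals:sig:article-auto-continuity-raj-dahya} establishes the base case $d=1$: the monoid $\realsNonNeg$ is a locally compact Hausdorff topological monoid, extendible to the locally compact Hausdorff group $\reals$, and positive in the identity. By \Cref{prop:products-of-good-are-good:sig:article-auto-continuity-raj-dahya} (equivalently, by \Cref{cor:example-multiparam-are-good:sig:article-auto-continuity-raj-dahya} specialised to the real case), the $d$-fold product $\realsNonNeg^{d}$ under pointwise addition is then itself a locally compact Hausdorff monoid, extendible to $\reals^{d}$, and positive in the identity of $\reals^{d}$.

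With these hypotheses in hand, \Cref{thm:generalised-auto-continuity:sig:article-auto-continuity-raj-dahya} applies directly to any Banach space $E$ and any $\topWOT$-continuous semigroup ${T:\realsNonNeg^{d}\to\BoundedOps{E}}$, yielding $\topSOT$-continuity. Since the invocation of the main theorem is mechanical, there is no real obstacle: the only thing worth explicitly noting in the proof is the chain of citations (\Cref{e.g.:extendible-mon:reals:sig:article-auto-continuity-raj-dahya} $\Rightarrow$ \Cref{prop:products-of-good-are-good:sig:article-auto-continuity-raj-dahya} $\Rightarrow$ \Cref{cor:example-multiparam-are-good:sig:article-auto-continuity-raj-dahya} $\Rightarrow$ \Cref{thm:generalised-auto-continuity:sig:article-auto-continuity-raj-dahya}).
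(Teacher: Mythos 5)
Your proof is correct and takes exactly the same route as the paper, which obtains this corollary by applying \Cref{thm:generalised-auto-continuity:sig:article-auto-continuity-raj-dahya} to \Cref{cor:example-multiparam-are-good:sig:article-auto-continuity-raj-dahya}, the latter being the product of the base example \Cref{e.g.:extendible-mon:reals:sig:article-auto-continuity-raj-dahya} via \Cref{prop:products-of-good-are-good:sig:article-auto-continuity-raj-dahya}. Nothing further is needed.
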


\begin{rem}
    In the proof of \Cref{thm:generalised-auto-continuity:sig:article-auto-continuity-raj-dahya},
    weak continuity only played a role in obtaining
    the boundedness of $T$ on compact sets,
    as well as the well-definedness of the elements in $D$.
    Now, another proof of the classical result exists under weaker conditions,
    \viz weak measurability, provided the semigroups are almost separably valued
    (\cf
    \cite[Theorem~9.3.1 and Theorem~10.2.1--3]{hillephillips2008}%
    ).
    It remains open, whether the approach in
        \cite{hillephillips2008}
    can be adapted to our context,
    to yield the result under weaker assumptions.
\end{rem}




\subsection*{Acknowledgement}

\noindent
The author is grateful to Konrad Zimmermann for his helpful comments on a preliminary version.

\bibliographystyle{acm}
\def\bibname{References}
\bgroup
\footnotesize


\egroup

    \addresseshere
\end{document}